\documentclass[10pt]{article}
\usepackage{amssymb}
\usepackage{amsmath}
\usepackage{amsfonts}
\usepackage{amsthm}
\usepackage{color}
\usepackage[english]{babel}
\usepackage[affil-it]{authblk}

\sloppy
\newtheorem{theorem}{Theorem}
\newtheorem{lemma}{Lemma}
\newtheorem{proposition}{Proposition}


\title{On reconstruction of eigenfunctions of Johnson graphs}

\author{Konstantin Vorob'ev %
  \thanks{E-mail address: \texttt{vorobev@math.nsc.ru}}}
\affil{Sobolev Institute of Mathematics, Novosibirsk, Russia;\\ Novosibirsk State University, Novosibirsk, Russia}

\begin{document}

\maketitle

\begin{abstract}

In the present work we consider the problem of a reconstruction of eigenfunctions of the Johnson graph $J(n,w)$. We give necessary and sufficient numerical conditions for a unique reconstruction of an eigenfunction with given eigenvalue by its values on a sphere of given radius $r$ for $n$ big enough. We also provide examples of functions equal on the sphere but not equal on the full vertex set in the case of a failure of these conditions.  

\end{abstract}
\noindent{\bf Keywords:} Eigenspace, Reconstruction, Johnson graph, Eberlein polynomials.

\section{Introduction}

There is a classical problem of a reconstruction of a function with given properties defined on a set of vertices $V$ of some graph $G=(V,E)$ by its values on some $V'\subset V$. Typical questions in this area are as follows: is this reconstruction unique, what is minimum size of such a set? Typically, if this function corresponds to some regular or symmetric objects in a graph such as perfect codes, completely regular codes, coverings, then the problem may be reduced to investigating properties of eigenspaces of the graph $G$.
Thus the problem of a reconstruction of eigenvectors (eigenfunctions) from graph's eigenspaces by partial information plays important role in graph theory.

In Hamming graphs this problem was first considered in \cite{Avg}. In that work Avgustinovich showed that a $1$-perfect code is determined by its code vertices in the middle layer of $H(n,2)$. In \cite{AvgVas} this result was generalized for an eigenfunction with corresponding eigenvalue. Later the problem of partial and full reconstruction of an arbitrary eigenfunction of the Hamming graph $H(n,q)$ by its values on a sphere was considered by Vasil'eva in \cite{Vas2} and the author in \cite{Vorob} for $q=2$ and by Vasil'eva in \cite{Vasq} for $q> 2$.
The problem of a reconstruction of eigenfunctions and perfect colorings (equitable partitions) for transitive graphs was considered in \cite{AvgLis}.

In the present work we consider this problem for eigenfunctions of the Johnson graph $J(n,w)$. For given $r$, $w$, $n$ and an eigenvalue $\lambda$, we give necessary and sufficient numerical conditions of a unique reconstruction of an eigenfunction with given eigenvalue $\lambda$ by its values on a sphere or a ball of given radius $r$ for $n$ big enough.

\section{Preliminaries}

In our definitions and statements we follow the notation from \cite{VMV}. 

Let $G=(V,E)$ be an undirected graph. A function $f:V\rightarrow \mathbb{R}$ is called a $\lambda$-{\it eigenfunction} of $G$ if the following equality holds for any
 $x\in V$:

 $$\lambda f(x)=\sum_{y\in V:(x,y)\in E} f(y). $$
Equivalently, $f$ is a $\lambda$-eigenfunction of $G$ if its
vector of values $\overline{f}$ is an eigenvector of the adjacency
matrix $A$ of $G$ with eigenvalue $\lambda$ or $\overline{f}$ is
the all-zero vector, i.e. the following holds:
$$A\overline{f}=\lambda \overline{f}.$$
For $f:V\rightarrow \mathbb{R}$ and $V'\subseteq V$, we will denote by $f\big|_{V'}$ a function which is defined on the set $V'$ and coincides with $f$ on this set.     

The vertices of the Johnson graph $J(n,w)$, $n\ge 2w$ are the binary
vectors of length $n$ with $w$ ones, where two vectors are adjacent if they have exactly $w-1$ common ones. Let us denote by $wt(x)$ the number of ones in a vector $x$. Distance $d(x,y)$ between vertices $x$ and $y$ is defined as $\frac{1}{2}wt(x+y)$, where $x+y$ is a sum of vectors modulo $2$. It is easy to see that this distance is equivalent to the length of the shortest path in the graph. For a vertex $x_0$ of $J(n,w)$ a sphere with the center in $x_0$ is a set $S_r(x_0)=\{y\in J(n,w)|d(x_0,y)=r\}$ and a ball is a set $B_r(x_0)=\{y\in J(n,w)|d(x_0,y)\le r\}$.

The Johnson graph $J(n,w)$ is distance-regular (see, for example, \cite{BCN}) with $w+1$ distinct eigenvalues of its adjacency matrix $\lambda_i=(w-i)(n-w-i)-i$, $i=0,1, \dots w$ and corresponding multiplicities $\binom{n}{i}-\binom{n}{i-1}$.

It is known (see for example \cite{Delsarte}) that for any $\lambda_i(n,w)$-eigenfunction of $J(n,w)$ and vertex $x$
the sum of the values of $f$ on vertices at distance $k$ from $x$ can be expressed using the Eberlein polynomials and the value $f(x)$:
$$\sum_{y \in J(n,w), d(x,y)=k}f(y)=f(x)E_k(i,w,n),$$ 
where $E_k(i,w,n)= {\sum_{j=0}^{k}{(-1)^j{i \choose j}{w-i \choose k-j}{n-w-i \choose k-j}}}.$

The following example of so-called {\it radial} function will be useful for our future arguments.
\begin{lemma}\label{radial}
Let $x_0$ be a vertex of the graph $J(n,w)$, $i\in \{0,1,\dots,w\}$ and $f$ be a function defined on vertices of the graph as follows:

$$f(x)=\frac{E_{d(x,x_0)}(i,w,n)}{{w \choose i}{n-w \choose i}}.$$

Then $f$ is $\lambda_i(n,w)$-eigenfunction of $J(n,w)$.
\end{lemma}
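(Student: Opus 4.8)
The plan is to exploit distance-regularity and to apply the Eberlein identity of the preliminaries not at a generic vertex but at the centre $x_0$ itself. First observe that $f$ is \emph{radial}: $f(x)$ depends only on $k=d(x,x_0)$. I will exhibit \emph{some} nonzero radial $\lambda_i$-eigenfunction of $J(n,w)$, compute its values explicitly, and recognise that it agrees with $f$ up to a nonzero scalar; since a nonzero multiple of a $\lambda_i$-eigenfunction is again a $\lambda_i$-eigenfunction, this proves the lemma.

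For the construction, recall that in a distance-regular graph the distance partition $\{S_0(x_0),\dots,S_w(x_0)\}$ of the vertex set is equitable, and its quotient (intersection) matrix $B$ is the tridiagonal $(w+1)\times(w+1)$ matrix with $B_{k,k-1}=c_k$, $B_{k,k}=a_k$, $B_{k,k+1}=b_k$, whose spectrum is exactly $\{\lambda_0,\dots,\lambda_w\}$ (see \cite{BCN}); for $J(n,w)$ one computes $c_k=k^2$, $b_k=(w-k)(n-w-k)$ and $a_k=k(n-2k)$. Choose a $\lambda_i$-eigenvector $(\varphi_0,\dots,\varphi_w)$ of $B$ and lift it along the partition: the radial function $g(x):=\varphi_{d(x,x_0)}$ is a $\lambda_i$-eigenfunction of $J(n,w)$, because a vertex $x$ with $d(x,x_0)=k$ has exactly $c_k$, $a_k$, $b_k$ neighbours in $S_{k-1}(x_0)$, $S_k(x_0)$, $S_{k+1}(x_0)$ respectively, so $\sum_{y\sim x}g(y)=c_k\varphi_{k-1}+a_k\varphi_k+b_k\varphi_{k+1}=(B\varphi)_k=\lambda_i\varphi_k=\lambda_i g(x)$. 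Now apply the Eberlein identity to $g$ at $x_0$. Since $S_k(x_0)$ has $\binom{w}{k}\binom{n-w}{k}$ vertices (choose which $k$ of the $w$ ones of $x_0$ to delete and which $k$ of its $n-w$ zeros to add), we get $\binom{w}{k}\binom{n-w}{k}\,\varphi_k=\sum_{y\in S_k(x_0)}g(y)=g(x_0)\,E_k(i,w,n)=\varphi_0\,E_k(i,w,n)$, so $\varphi_0\neq 0$ (else $\varphi\equiv 0$) and $\varphi_k=\varphi_0\,E_k(i,w,n)\big/\big(\binom{w}{k}\binom{n-w}{k}\big)$ for all $k$. Thus $g$ is the scalar $\varphi_0$ times the radial function whose value at distance $k$ from $x_0$ equals $E_k(i,w,n)\big/\big(\binom{w}{k}\binom{n-w}{k}\big)$, which is the function $f$ of the statement; hence $f$ is a $\lambda_i$-eigenfunction.

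The only step that is not routine bookkeeping is the idea of feeding the Eberlein identity the centre $x_0$: the sum of a radial eigenfunction over $S_k(x_0)$ equals, on the one hand, $\binom{w}{k}\binom{n-w}{k}$ times its constant value there and, on the other hand, $E_k(i,w,n)$ times its value at $x_0$, and equating these two pins down all the values simultaneously. Alternatives I would keep in reserve: symmetrising an arbitrary $\lambda_i$-eigenfunction over the stabiliser of $x_0$ in $\mathrm{Aut}(J(n,w))$ and invoking distance-transitivity of $J(n,w)$ to see that the average is radial; or, more computationally, verifying the three-term recurrence $k^2\psi(k-1)+k(n-2k)\psi(k)+(w-k)(n-w-k)\psi(k+1)=\lambda_i\psi(k)$ for $\psi(k)=E_k(i,w,n)\big/\big(\binom{w}{k}\binom{n-w}{k}\big)$ directly from the contiguous relations of the polynomials $E_k$ — correct, but less transparent than the structural argument, so I would relegate it to a remark.
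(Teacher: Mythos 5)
Your argument is correct and complete, and it supplies exactly the details the paper omits: the paper's own ``proof'' of Lemma~\ref{radial} is the one-line remark that the claim follows from the definition of an eigenfunction or from vertex-transitivity. Your chain --- the distance partition is equitable with tridiagonal quotient matrix $B$ whose spectrum is $\{\lambda_0,\dots,\lambda_w\}$, a $\lambda_i$-eigenvector $\varphi$ of $B$ lifts to a radial $\lambda_i$-eigenfunction $g$, and the Eberlein identity evaluated at the centre pins down $\varphi_k$ up to the scalar $\varphi_0\neq 0$ --- is sound, and it is a clean realization of the ``directly from the definition'' route; the stabiliser-averaging alternative you keep in reserve is the ``vertex-transitivity'' route.

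One point you gloss over and must make explicit: the function you actually obtain is $x\mapsto E_{d(x,x_0)}(i,w,n)\big/\bigl(\tbinom{w}{d(x,x_0)}\tbinom{n-w}{d(x,x_0)}\bigr)$, whereas the lemma as printed has the \emph{constant} denominator $\tbinom{w}{i}\tbinom{n-w}{i}$. These are not scalar multiples of one another, and the printed version is in general not an eigenfunction: a radial function $\psi_k$ is a $\lambda_i$-eigenfunction iff $c_k\psi_{k-1}+a_k\psi_k+b_k\psi_{k+1}=\lambda_i\psi_k$, while $E_k(i,w,n)$ satisfies the transposed recurrence $b_{k-1}E_{k-1}+a_kE_k+c_{k+1}E_{k+1}=\lambda_iE_k$; dividing by $v_k=\tbinom{w}{k}\tbinom{n-w}{k}$ and using $v_kb_k=v_{k+1}c_{k+1}$ is precisely what converts one recurrence into the other. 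Concretely, in $J(6,3)$ with $i=1$ one has $\lambda_1=3$, $E_0=1$, $E_1=3$, $v_1=9$, so $\sum_{y\sim x_0}E_{d(y,x_0)}=27\neq 3=\lambda_1E_0$, while $\sum_{y\sim x_0}E_1/v_1=3$. Hence the subscript $i$ in the lemma's binomial coefficients is a typo for $d(x,x_0)$ --- the way the lemma is invoked in the proof of Theorem~\ref{mainsphere}, with denominator $\tbinom{w-2k_1}{j}\tbinom{n-w-2k_2}{j}$ for $j$ the distance, confirms the intended reading. Your proof establishes that corrected statement; you should say so rather than asserting without comment that $g/\varphi_0$ ``is the function $f$ of the statement.''
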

The proof can be obtained directly from the definition of an eigenfunction or from the fact that the Johnson graph is vertex-transitive.

Let $f$ be a real-valued $\lambda_i(n,w)$-eigenfunction of
$J(n,w)$ for some $i\in \{0,1,\dots,w\}$ and $j_1,j_2\in
\{1,2,\dots,n\}$, $j_1<j_2$. A real-valued function
$f_{j_1,j_2}$ is defined as follows: for any vertex $y=(y_1,y_2, \dots
,y_{j_1-1},y_{j_1+1}, \dots ,y_{j_2-1},y_{j_2+1},\dots,y_n)$ of
$J(n-2,w-1)$
\begin{flushright}$f_{j_1,j_2}(y)=f(y_1,y_2, \dots
,y_{j_1-1},1,y_{j_1+1}, \dots ,y_{j_2-1},0,y_{j_2+1},\dots,y_n)\,$
\end{flushright} \begin{flushright}
 $-f(y_1,y_2, \dots ,y_{j_1-1},0,y_{j_1+1}, \dots ,y_{j_2-1},1,y_{j_2+1}, \dots ,y_n).$
\end{flushright}

\begin{lemma}\cite{VMV}\label{derivative}
 If f is a $\lambda_i(n,w)$-eigenfunction of
$J(n,w)$ then $f_{j_1,j_2}$ is a
$\lambda_{i-1}(n-2,w-1)$-eigenfunction of $J(n-2,w-1)$.
\end{lemma}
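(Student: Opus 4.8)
The plan is to reduce, by permuting coordinates (a graph automorphism of $J(n,w)$ that sends $\lambda_i$-eigenfunctions to $\lambda_i$-eigenfunctions), to the case $j_1=n-1$, $j_2=n$. Write $g:=f_{n-1,n}$, so that for a vertex $y$ of $J(n-2,w-1)$ we have $g(y)=f(y,1,0)-f(y,0,1)$, where $(y,a,b)$ denotes the length-$n$ vector obtained by appending the bits $a,b$ to $y$. Fixing such a $y$, I would verify the eigenfunction identity $\sum_{z\sim y}g(z)=\lambda_{i-1}(n-2,w-1)\,g(y)$ directly.

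The computational heart is to expand the defining equation of a $\lambda_i(n,w)$-eigenfunction applied to the two vertices $(y,1,0)$ and $(y,0,1)$ of $J(n,w)$. The neighbours of $(y,1,0)$ come in four families: vectors $(z,1,0)$ with $z\sim y$ in $J(n-2,w-1)$; vectors $(y',1,1)$ where $y'$ is obtained from $y$ by deleting a one; vectors $(y'',0,0)$ where $y''$ is obtained from $y$ by adding a one; and the single vector $(y,0,1)$. The neighbours of $(y,0,1)$ split analogously: $(z,0,1)$ with $z\sim y$; the same vectors $(y',1,1)$; the same vectors $(y'',0,0)$; and the single vector $(y,1,0)$. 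Summing $f$ over each neighbourhood and invoking the eigenfunction equation therefore gives
$$\lambda_i(n,w)f(y,1,0)=\sum_{z\sim y}f(z,1,0)+\Sigma_1+\Sigma_2+f(y,0,1),$$
$$\lambda_i(n,w)f(y,0,1)=\sum_{z\sim y}f(z,0,1)+\Sigma_1+\Sigma_2+f(y,1,0),$$
where $\Sigma_1=\sum_{y'}f(y',1,1)$ and $\Sigma_2=\sum_{y''}f(y'',0,0)$ are the \emph{same} in both lines.

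Subtracting the two equations cancels $\Sigma_1$ and $\Sigma_2$ and yields $\lambda_i(n,w)g(y)=\sum_{z\sim y}g(z)-g(y)$, i.e. $\sum_{z\sim y}g(z)=(\lambda_i(n,w)+1)g(y)$. The proof then finishes with the elementary identity $\lambda_i(n,w)+1=\lambda_{i-1}(n-2,w-1)$, which follows by substituting into $\lambda_i(n,w)=(w-i)(n-w-i)-i$: the two factors $(w-1)-(i-1)$ and $(n-2)-(w-1)-(i-1)$ equal $w-i$ and $n-w-i$ respectively, while $-(i-1)=-i+1$.

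I expect the only delicate point to be the neighbour bookkeeping: one must check that moving a one of $(y,1,0)$ to a zero either stays within the first $n-2$ coordinates (the $(z,1,0)$ family) or lands in coordinate $n$ (the $\Sigma_1$ family), that moving the one in coordinate $n-1$ gives either the $\Sigma_2$ family or the vector $(y,0,1)$, and that the same enumeration applied to $(y,0,1)$ produces literally the same sums $\Sigma_1,\Sigma_2$ (this is what makes the cancellation exact). A useful sanity check is that the four families have sizes $(w-1)(n-w-1)$, $w-1$, $n-w-1$, and $1$, which sum to $w(n-w)$, the degree of $J(n,w)$.
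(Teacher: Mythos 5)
Your proof is correct and complete: the neighbour enumeration of $(y,1,0)$ and $(y,0,1)$ is right, the sums $\Sigma_1,\Sigma_2$ do coincide and cancel, and the identity $\lambda_i(n,w)+1=\lambda_{i-1}(n-2,w-1)$ checks out. The paper itself gives no proof of this lemma (it is cited from the reference [VMV]), and your direct verification is exactly the standard argument one would expect there.
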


Actually there is also a way to construct an eigenfunction in a higher Johnson graph.
\begin{lemma}\label{integral}
 Let f be a $\lambda_i(n,w)$-eigenfunction of
$J(n,w)$ and $g$ be a function on vertices $J(n+2,w+1)$ defined as follows: 

   $$g(y_1,y_2,\dots, y_{n+1}, y_{n+2})= \begin{cases}
     f(y_1,y_2,\dots,y_n), y_{n+1}=1 \text{ and } y_{n+2}=0 \\
   -f(y_1,y_2,\dots,y_n), y_{n+1}=0 \text{ and } y_{n+2}=1 \\
     0, \text{otherwise.}
    \end{cases}$$
Then $g$ is a $\lambda_{i+1}(n+2,w+1)$-eigenfunction of
$J(n+2,w+1)$.
\end{lemma}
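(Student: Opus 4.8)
The plan is to verify the eigenfunction equality $\lambda_{i+1}(n+2,w+1)\,g(z) = \sum_{z' \sim z} g(z')$ directly for each vertex $z$ of $J(n+2,w+1)$, splitting into cases according to the values of the last two coordinates $z_{n+1}, z_{n+2}$. By the symmetry $g \mapsto -g$ under swapping the roles of coordinates $n+1$ and $n+2$, it suffices to treat the case $z_{n+1}=1, z_{n+2}=0$ (where $g(z) = f(z_1,\dots,z_n)$) and the case $z_{n+1}=z_{n+2}=0$ (where $g(z)=0$); the case $z_{n+1}=z_{n+2}=1$ is handled symmetrically to the latter, and $z_{n+1}=0,z_{n+2}=1$ symmetrically to the former.

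First I would handle $z_{n+1}=1, z_{n+2}=0$. Write $x = (z_1,\dots,z_n)$, a vertex of $J(n,w)$. The neighbors $z'$ of $z$ in $J(n+2,w+1)$ fall into three groups: (a) those with $z'_{n+1}=1, z'_{n+2}=0$, which correspond to moving a one among the first $n$ coordinates, i.e. $z' = (x', 1, 0)$ with $x' \sim x$ in $J(n,w)$ — these contribute $\sum_{x'\sim x} f(x') = \lambda_i(n,w) f(x)$; (b) those with $z'_{n+1}=1, z'_{n+2}=1$, on which $g$ vanishes, contributing $0$; (c) those with $z'_{n+1}=0, z'_{n+2}=1$, where a one has moved from position $n+1$ to position $n+2$ and the first $n$ coordinates are unchanged, so $z'=(x,0,1)$ and $g(z') = -f(x)$ — exactly one such neighbor, contributing $-f(x)$. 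Summing gives $(\lambda_i(n,w) - 1) f(x)$, so I must check the arithmetic identity $\lambda_i(n,w) - 1 = \lambda_{i+1}(n+2,w+1)$, which is immediate from $\lambda_i(n,w) = (w-i)(n-w-i) - i$ applied with parameters $(i+1, w+1, n+2)$: $(w+1-(i+1))(n+2-(w+1)-(i+1)) - (i+1) = (w-i)(n-w-i) - i - 1$.

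Next I would handle $z_{n+1}=z_{n+2}=0$, where $g(z)=0$, so I need the neighbor sum to vanish. Here $x=(z_1,\dots,z_n)$ is a vertex of $J(n,w+1)$ (weight $w+1$ among the first $n$ coordinates). Neighbors $z'$ with $z'_{n+1}=z'_{n+2}=0$ have $g(z')=0$. The remaining neighbors are obtained by moving one of the $w+1$ ones from the first $n$ coordinates into position $n+1$ (giving $z'=(x'',1,0)$ with $g(z')=f(x'')$) or into position $n+2$ (giving $z'=(x'',0,1)$ with $g(z')=-f(x'')$), where in both cases $x''$ ranges over the same set of $w+1$ weight-$w$ vectors obtained from $x$ by deleting a one. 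These pair up and cancel, so the sum is $0$, as required. The symmetric cases follow by the sign symmetry noted above.

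I do not anticipate a serious obstacle here; the argument is a routine case check. The only point requiring care is bookkeeping the neighbor structure correctly — in particular, not overlooking neighbor group (c) in the first case, which is precisely where the "$-1$" shift in the eigenvalue comes from — and confirming that in the vanishing cases the nonzero contributions genuinely cancel in pairs rather than merely summing to something that happens to be zero for other reasons. An alternative, slicker route would be to observe that $g$ is, up to the sign pattern, the natural "lift" dual to the construction $f \mapsto f_{j_1,j_2}$ of Lemma \ref{derivative}, and to deduce the result from that lemma together with a dimension/adjacency-count argument; but the direct verification is shorter and self-contained, so I would present that.
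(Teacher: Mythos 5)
Your direct verification is correct and is exactly the approach the paper intends (the paper only remarks that the proof ``can be obtained by direct using the definition of an eigenfunction''); the eigenvalue identity $\lambda_{i+1}(n+2,w+1)=\lambda_i(n,w)-1$ and the pairwise cancellations in the vanishing cases all check out. One trivial bookkeeping remark: in the case $z_{n+1}=1$, $z_{n+2}=0$ the neighbors actually fall into four groups, not three --- you omitted those with $z'_{n+1}=z'_{n+2}=0$, obtained by moving the one at position $n+1$ into the first $n$ coordinates --- but $g$ vanishes on them, so the computation is unaffected.
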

The proof can be obtained by direct using the definition of an eigenfunction. 

Let us consider a function $f_0^{i,w,n}:J(n,w)\rightarrow \mathbb{R}$ defined in the following way: 

\begin{flushleft}
$f_0^{i,w,n}(x_1,x_2,\dots, x_i,x_{i+1},x_{i+2},x_{2i},x_{2i+1},\dots, x_n)=$\end{flushleft} $$\begin{cases}
    (-1)^{x_1+x_2+\dots+x_i}, x_{j}+x_{j+i}=1 \text{ for all } j=1,2,\dots i  \\
     0, \text{otherwise.}
    \end{cases}
$$

This function is a $\lambda_i(n,w)$-eigenfunction of
$J(n,w)$ \cite[Proposition 1]{VMV}(our function is a special case of $f^{i,w,n}$ for $M=\{1,2,\dots,i\}$, $M'=\{i+1,i+2,\dots,2i\}$)

\begin{proposition}\label{examplesof0} Let $n,w,i,r$ be non-negative integers such that $n\ge 2w$, $w\ge i$ and $w\ge r$.
\begin{enumerate}
\item Let $n\ge w+2r+2$, $i>r$ and $x=(x_1,x_2,\dots, x_n)$ be a vertex of $J(n,w)$ such that $x_1=x_2=\dots=x_{r+1}=0$ and $x_{i+1}=x_{i+2}=\dots=x_{i+r+1}=0$. Then $f_0^{i,w,n}\big|_{B_r(x)}\equiv 0$.
\item Let $i\le r$, $r>w-i$ and $x=(x_1,x_2,\dots, x_n)$ be a vertex of $J(n,w)$ such that $x_1=x_2=\dots=x_{w-r+1}=1$ and $x_{i+1}=x_{i+2}=\dots=x_{i+w-r+1}=1$. Then $f_0^{i,w,n}\big|_{S_r(x)}\equiv 0$.

\end{enumerate}
\end{proposition}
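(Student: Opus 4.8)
The plan is to prove both items by contradiction, in each case starting from a hypothetical vertex $y$ in the prescribed ball (resp.\ sphere) with $f_0^{i,w,n}(y)\neq 0$. The only property of $f_0^{i,w,n}$ needed is immediate from its definition: if $f_0^{i,w,n}(y)\neq 0$, then $y_j+y_{j+i}=1$ for every $j\in\{1,2,\dots,i\}$. I will also repeatedly use that, since $x$ and $y$ both have weight $w$, the quantity $d(x,y)=\tfrac12 wt(x+y)$ equals the number of coordinates that are $1$ in $y$ but $0$ in $x$, equals the number that are $1$ in $x$ but $0$ in $y$, and hence equals $w$ minus the number of coordinates that are $1$ in both $x$ and $y$. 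Write $T_j=\{j,j+i\}$ for $j=1,\dots,i$; these index pairs are pairwise disjoint.

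For item 1, the assumption $i>r$ ensures $r+1\le i$, so the pairs $T_1,\dots,T_{r+1}$ are all defined and the condition $y_j+y_{j+i}=1$ applies to each of them. For $j\le r+1$ the hypotheses on $x$ give $x_j=0$ (since $j\le r+1$) and $x_{j+i}=0$ (since $i+1\le j+i\le i+r+1$), while $y_j+y_{j+i}=1$ forces at least one of the coordinates $j,\,j+i$ to be $1$ in $y$. As $T_1,\dots,T_{r+1}$ are disjoint, this yields at least $r+1$ coordinates that are $1$ in $y$ and $0$ in $x$, i.e.\ $d(x,y)\ge r+1$, contradicting $y\in B_r(x)$. (The bound $n\ge w+2r+2$ is used only to guarantee that such an $x$ exists, the prescribed zeros occupying $2(r+1)$ distinct positions.)

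For item 2, the assumption $r>w-i$ ensures $w-r+1\le i$, and $r\le w$ ensures $w-r+1\ge 1$, so the pairs $T_1,\dots,T_{w-r+1}$ are all defined and $y_j+y_{j+i}=1$ applies to each. For such $j$ the hypotheses on $x$ give $x_j=1$ and $x_{j+i}=1$, while $y_j+y_{j+i}=1$ means exactly one of the coordinates $j,\,j+i$ is $1$ in $y$; that coordinate is then $1$ in both $x$ and $y$. Summing over the $w-r+1$ disjoint pairs shows that at least $w-r+1$ coordinates are $1$ in both $x$ and $y$, so $d(x,y)\le w-(w-r+1)=r-1<r$, contradicting $y\in S_r(x)$.

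I do not expect a genuine obstacle here: the statement reduces to a counting argument on supports, and the numerical hypotheses are exactly what is needed to make the relevant families of disjoint coordinate pairs exist and to pin down the values of $x$ on those pairs. The only slightly delicate bookkeeping is verifying the index ranges ($r+1\le i$ for item 1 and $1\le w-r+1\le i$ for item 2) and, in item 1, that the two prescribed blocks of zeros of $x$ are disjoint — all of which follow directly from the stated inequalities $i>r$ and $r>w-i$.
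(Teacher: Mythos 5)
Your proof is correct and takes essentially the same approach as the paper's: in both items the paper also counts the disjoint pairs $\{j,j+i\}$ on which $x$ is constantly $0$ (resp.\ constantly $1$) and observes that a vertex with nonzero $f_0^{i,w,n}$-value must place exactly one $1$ in each pair, forcing $d(x,y)\ge r+1$ (resp.\ at most $r-1$). Your write-up just spells out the distance bookkeeping that the paper leaves implicit.
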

\begin{proof}
 In the first case given inequalities guarantee us that $n-2(r+1)\ge w$ and consequently the existence of a vertex $x$. By definition of $f_0^{i,w,n}$ this function has non-zero values only on vectors with exactly one $1$ in $i$ pairs of coordinate positions $(x_j,x_{i+j})$, $j=1,2,\dots, i$. Therefore any vertex at distance $r<i$ will have at least one such pair with two zeros.   
 
 In the second case given inequalities guarantee us that $w-r+1\le i$ and $w\ge 2(w-r+1)$ and consequently the existence of vertex $x$. First $w-r+1$ pairs $(x_j,x_{i+j})$, $j=1,2,\dots, w-r+1$ of coordinate positions of $x$ are equal to $(1,1)$. Therefore any vertex at distance $r>w-i$ will have at least one such pair with two zeros.   
\end{proof}

Let $f$ be a real-valued function defined on the vertices of the
Johnson graph $J(n,j)$. The function $I^{j,w}(f)$ on the
vertices of $J(n,w)$ is defined as follows:
$$I^{j,w}(f)(x)=\sum_{y, wt(y)=j, d(x,y)=|w-j|} f(y). $$
The function $I^{i,w}(f)$ is called {\it induced} in $J(n,w)$ by
$f$ \cite{AvgMog}. In this work we will need next properties of the operator $I^{j,w}$.

 \begin{theorem}\label{i_f}\cite{VMV}

1. Let $f$ be a $\lambda$-eigenfunction of $J(n,j)$. Then if
$j\leq w$ then $I^{j,w}(f)$ is a
$(\lambda+(w-i)(n-i-w))$-eigenfunction of $J(n,w)$.

2. Let $f$ be a real-valued function on the vertices of $J(n,w)$.
Then $I^{w,w-1}(f)\equiv 0$ iff $f$ is a $(-w)$-eigenfunction.
  \end{theorem}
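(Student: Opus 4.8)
The plan is to turn both assertions into identities between inclusion matrices and adjacency matrices. Identify a function with its vector of values; for $0\le a\le b\le n$ let $W_{a,b}$ be the $\binom{n}{b}\times\binom{n}{a}$ zero-one matrix whose transpose satisfies $\bigl(W_{a,b}^{\top}\bigr)_{x,y}=1$ exactly when $\mathrm{supp}(y)\subseteq\mathrm{supp}(x)$, and write $A_m$ for the adjacency matrix of $J(n,m)$. Unwinding the definition of the induced operator gives the two matrix descriptions $\overline{I^{j,w}(f)}=W_{j,w}^{\top}\overline f$ for $j\le w$ and $\overline{I^{w,w-1}(f)}=W_{w-1,w}\overline f$, so everything reduces to how $A_m$ interacts with these matrices.

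For part 2 I would first establish the factorization $W_{w-1,w}^{\top}W_{w-1,w}=A_w+wI$, which is immediate from a one-line count: the $(x,x')$-entry on the left is the number of common $(w-1)$-subsets of the $w$-sets $x$ and $x'$, i.e.\ $\binom{|x\cap x'|}{w-1}$, which is $w$ when $x=x'$, is $1$ when $x$ and $x'$ are adjacent, and vanishes otherwise. Since over $\mathbb{R}$ one has $\ker(M^{\top}M)=\ker M$, this yields $I^{w,w-1}(f)\equiv 0\iff W_{w-1,w}\overline f=0\iff W_{w-1,w}^{\top}W_{w-1,w}\overline f=0\iff(A_w+wI)\overline f=0$, i.e.\ $f$ is a $(-w)$-eigenfunction (the zero function being admitted by the paper's convention); and $-w=\lambda_w(n,w)$. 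That is exactly part 2.

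For part 1 I would also prove the companion identity $W_{w-1,w}W_{w-1,w}^{\top}=A_{w-1}+(n-w+1)I$ — the same count, now tallying the $w$-sets containing $z\cup z'$ for two $(w-1)$-sets $z,z'$. Multiplying $A_w=W_{w-1,w}^{\top}W_{w-1,w}-wI$ on the right by $W_{w-1,w}^{\top}$ and substituting the companion identity gives the one-step intertwining relation
$$A_w\,W_{w-1,w}^{\top}=W_{w-1,w}^{\top}\bigl(A_{w-1}+(n-2w+1)I\bigr).$$
Because $W_{j,w}^{\top}$ is a positive scalar multiple of the product $W_{w-1,w}^{\top}W_{w-2,w-1}^{\top}\cdots W_{j,j+1}^{\top}$ (equivalently $I^{j+1,w}\circ I^{j,j+1}=(w-j)\,I^{j,w}$, a one-line count of chains), iterating the one-step relation and using $\sum_{m=j+1}^{w}(n-2m+1)=(w-j)(n-w-j)$ gives
$$A_w\,W_{j,w}^{\top}=W_{j,w}^{\top}\bigl(A_j+(w-j)(n-w-j)I\bigr).$$
Applying this to $\overline f$ with $A_j\overline f=\lambda\overline f$ shows $I^{j,w}(f)$ is a $\bigl(\lambda+(w-j)(n-w-j)\bigr)$-eigenfunction of $J(n,w)$ (or identically zero); and when $\lambda=\lambda_i(n,j)$ a short substitution gives $\lambda_i(n,j)+(w-j)(n-w-j)=\lambda_i(n,w)$, the eigenvalue displayed in the statement.

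The actual computation is confined to the three zero-one matrix counts; the one thing to watch is that the scalar linking $W_{j,w}^{\top}$ to the telescoping product is positive, hence irrelevant to which eigenspace the image occupies, so it must be bookkept separately from the eigenvalue shift. An alternative to the iteration is to obtain the intertwining relation for part 1 in one step, by directly counting — for a fixed $w$-set $x$ and $j$-set $y$ — the $w$-sets adjacent to $x$ and containing $y$ against the $j$-sets adjacent to $y$ and contained in $x$, splitting into the cases $|x\cap y|=j$, $|x\cap y|=j-1$, and $|x\cap y|\le j-2$; this avoids the composition argument at the cost of a short case analysis. I expect the main obstacle to be purely organizational: keeping the two transposes and the source/target weights in $I^{j,w}$ and $I^{w,w-1}$ straight throughout.
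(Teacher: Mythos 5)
Your proposal is correct, but note that the paper does not actually prove this statement at all --- it is imported from \cite{VMV} as a black box --- so there is no internal proof to compare against; what you have written is a genuine, self-contained proof of the cited result. Your route (inclusion matrices $W_{a,b}$, the two Gram-matrix counts $W_{w-1,w}W_{w-1,w}^{\top}=A_w+wI$ and $W_{w-1,w}^{\top}W_{w-1,w}=A_{w-1}+(n-w+1)I$ in the standard orientation, the kernel identity $\ker(M^{\top}M)=\ker M$ for part 2, and the telescoped intertwining relation $A_w W_{j,w}=W_{j,w}(A_j+(w-j)(n-w-j)I)$ for part 1) is the standard and essentially the intended argument; the factorization $(w-j)!\,I^{j,w}=I^{w-1,w}\circ\cdots\circ I^{j,j+1}$ that you use is even stated explicitly in the paper right after the theorem. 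Two remarks. First, your computation exposes what is evidently a typo in the statement as printed: the shift you derive is $(w-j)(n-w-j)$, whereas the theorem displays $(w-i)(n-i-w)$ with an $i$ that is never bound; your verification that $\lambda_i(n,j)+(w-j)(n-w-j)=\lambda_i(n,w)$ shows your version is the one consistent with the paper's own remark that the inducing operator preserves the index of the eigenvalue, so you have proved the intended statement. Second, your stated convention for $W_{a,b}$ is internally inconsistent (as written, $(W_{a,b}^{\top})_{x,y}=1$ would require a $b$-set to sit inside an $a$-set with $a\le b$), and consequently some of your transposes are on the wrong side; this is purely notational and every count and every kernel argument goes through once the orientation is fixed, but it is exactly the bookkeeping hazard you yourself flagged, so tighten it before writing a final version.
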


Let us notice that the item $1$ from the Theorem \ref{i_f} means that the inducing operator maps an eigenfunction corresponding to an eigenvalue $\lambda_i(n,j)$ to an eigenfunction corresponding to an eigenvalue $\lambda_i(n,w)$. In other words, this operator saves the number of an eigenvalue. It is known, that it is also true for $j\geq w$.

It is easy to see that for $j\le w$, we have $$(w-j)!I^{j,w}(f)=I^{w-1,w}(\ldots (I^{j+1,j+2}(I^{j,j+1}(f)))),$$  $$(w-j)!I^{w,j}(f)=I^{j+1,j}(\ldots (I^{w-1,w-2}(I^{w,w-1}(f)))).$$ 
Based on this fact, properties of the eigenspaces of the Johnson graph and the inducing operator, it is possible to prove the following statement.

\begin{proposition}\label{inducing}
Let $n,w,w',i$ be nonnegative integers, $n\ge 2w$, $n\ge 2w'$, $i\le w$, $i\le w'$. Let $f$ be a $\lambda_i(n,w)$-eigenfunction of $J(n,w)$.
Then $\alpha f=I^{w',w}(I^{w,w'}(f))$ for some nonzero $\alpha$.
\end{proposition}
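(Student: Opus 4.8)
The plan is to prove the statement in two steps: first that $I^{w',w}(I^{w,w'}(f))$ equals \emph{some} scalar multiple $\alpha f$ of $f$, with $\alpha$ depending only on $n,w,w',i$; then that this $\alpha$ is nonzero. The first step needs only the Eberlein‑polynomial identity quoted above. Assume $w'\le w$ (the case $w'\ge w$ is analogous, with subsets and supersets interchanged). For a vertex $x$ of $J(n,w)$, unwinding the definition of the inducing operator gives
$$I^{w',w}(I^{w,w'}(f))(x)=\sum_{z\subseteq x,\,|z|=w'}\ \sum_{y\supseteq z,\,|y|=w}f(y)=\sum_{y,\,|y|=w}\binom{|x\cap y|}{w'}f(y).$$
Since $|x\cap y|=w-d(x,y)$, grouping the terms by the Johnson distance $k=d(x,y)$ and applying $\sum_{d(x,y)=k}f(y)=f(x)E_k(i,w,n)$ yields $I^{w',w}(I^{w,w'}(f))(x)=\alpha\,f(x)$ with $\alpha=\sum_{k\ge 0}\binom{w-k}{w'}E_k(i,w,n)$, a number independent of $f$ and $x$. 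So it remains to prove $\alpha\ne 0$.

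For the second step I would pass to consecutive weights. By the two factorizations displayed just before the statement, $I^{w,w'}$ and $I^{w',w}$ are, up to the nonzero factor $(|w-w'|!)^{-1}$, compositions of the single‑step operators $I^{k,k+1}$ and $I^{k+1,k}$. The crux is the pair of operator identities, proved by direct double counting,
$$I^{j+1,j}(I^{j,j+1}(h))=(n-j)h+A_{J(n,j)}h,\qquad I^{j,j+1}(I^{j+1,j}(h))=(j+1)h+A_{J(n,j+1)}h,$$
together with the coincidence $(n-j)+\lambda_i(n,j)=(j+1-i)(n-j-i)=(j+1)+\lambda_i(n,j+1)$. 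These show that on the eigenspace $U_i$ of the relevant Johnson graph both compositions act as multiplication by the single scalar $c_j:=(j+1-i)(n-j-i)$. Hence, using Theorem \ref{i_f} and the remark after it that inducing preserves the eigenvalue index (so each $I^{k,k\pm1}$ sends $U_i$ into $U_i$ of the neighbouring level), each single step restricted to $U_i$ has a scalar multiple of the other single step as a one‑sided inverse, so it is injective; and since $\dim U_i(n,m)=\binom{n}{i}-\binom{n}{i-1}$ does not depend on $m$, it is in fact an isomorphism between the two eigenspaces. Chaining these isomorphisms from $w$ to $w'$ and back, and repeatedly collapsing an adjacent pair of opposite single steps via the identities above, one obtains $I^{w',w}(I^{w,w'}(f))=\alpha f$ on $U_i(n,w)$ with $\alpha=(|w-w'|!)^{-2}\prod_{\min(w,w')\le k<\max(w,w')}(k+1-i)(n-k-i)$. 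For each such $k$ one has $k\ge\min(w,w')\ge i$, so $k+1-i\ge 1$, and $2(k+1)\le 2\max(w,w')\le n$, so $n-k-i\ge k+2-i\ge 2$; thus every factor is positive, $\alpha>0$, and the proof is complete (as a by‑product this identifies the Eberlein sum from the first step with the displayed product).

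The main obstacle is the second step, and within it the observation that the double‑counting identity collapses to a \emph{scalar} on $U_i$ — this rests entirely on the coincidence $(n-j)+\lambda_i(n,j)=(j+1)+\lambda_i(n,j+1)$, which is also what makes the two orders of composition agree. Once this is noticed, the chaining and the positivity check are routine index bookkeeping. (Trying to prove $\alpha\ne 0$ directly from the Eberlein sum of the first step is less convenient: its leading term as a polynomial in $n$ only settles the matter for $n$ large, whereas the product above is manifestly positive for every admissible $n$.)
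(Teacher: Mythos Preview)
Your proof is correct. The paper does not actually prove this proposition; it only remarks that the statement follows from the factorization of $I^{j,w}$ into single-step operators together with ``properties of the eigenspaces of the Johnson graph and the inducing operator,'' and your second step carries out exactly this: the double-counting identities for $I^{j+1,j}\circ I^{j,j+1}$ and $I^{j,j+1}\circ I^{j+1,j}$, combined with eigenvalue-index preservation (Theorem~\ref{i_f}), collapse the composite to the explicit scalar $\alpha=(|w-w'|!)^{-2}\prod_{k}(k+1-i)(n-k-i)$, which is positive under the hypotheses $i\le w,w'$ and $n\ge 2w,2w'$. Your first step, deriving $\alpha=\sum_k\binom{w-k}{w'}E_k(i,w,n)$ directly from the Eberlein relation, is an extra (and correct) observation not in the paper, yielding as a by-product a closed-form evaluation of that Eberlein sum. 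The isomorphism remark about $\dim U_i(n,m)$ is true but not actually needed: the telescoping works purely formally once each intermediate lies in the correct eigenspace.
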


Since an eigenspace is a linear space, the existence of two different $\lambda$-eigenfunctions which are equal on some set is equivalent to the existence of a nonzero $\lambda$-eigenfunction which is the all-zero function on this set. In the following section we will focus on spheres and balls in Johnson graph and the problem of existence of such functions for these sets.

\section{Main result}

First of all we will prove that if there are two $\lambda_i(n,w)$-eigenfunctions $f_1,f_2$ of the graph $J(n,w)$ which coincide on a ball of radius $i$ then $f_1 \equiv f_2$ on the whole vertex set.

\begin{theorem}\label{mainball}
	
	Let $i,w,r,n$ be integers, $w> r \ge i\ge 0$. Let $f$ be a $\lambda_i(n,w)$-eigenfunction of $J(n,w)$ such that $f\big|_{B_r(x_0)} \equiv 0$ for some vertex $x_0\in J(n,w)$. Then $f\equiv 0$.

\end{theorem}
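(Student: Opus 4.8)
The plan is to induct on the index $i$ of the eigenvalue $\lambda_i(n,w)$, reducing to smaller Johnson graphs via the derivative operator of Lemma~\ref{derivative}. For the base case $i=0$ I would use that $\lambda_0(n,w)=w(n-w)$ is the valency of the (regular) graph $J(n,w)$ and has multiplicity $\binom{n}{0}-\binom{n}{-1}=1$, so its eigenspace is spanned by the all-ones vector; hence every $\lambda_0$-eigenfunction is constant, and since $x_0\in B_r(x_0)$ the hypothesis gives $f(x_0)=0$, so $f\equiv 0$. For the inductive step I assume $i\ge 1$ and that the theorem holds for the index $i-1$ in every Johnson graph.

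For the step, fix a pair of coordinates $j_1<j_2$ at which $x_0$ has a $1$ in one position and a $0$ in the other (possible since $0<w<n$), and let $x_0'$ be the vertex of $J(n-2,w-1)$ obtained by deleting these two coordinates from $x_0$. For a vertex $y$ of $J(n-2,w-1)$, let $y^{1,0}$ and $y^{0,1}$ be the two vertices of $J(n,w)$ obtained by reinserting the entries $(1,0)$, respectively $(0,1)$, into positions $j_1,j_2$. A direct count of differing coordinates shows that exactly one of $y^{1,0},y^{0,1}$ agrees with $x_0$ in both positions $j_1,j_2$ and the other disagrees in both, so one of them is at distance $d(x_0',y)$ from $x_0$ and the other at distance $d(x_0',y)+1$. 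Hence if $d(x_0',y)\le r-1$ then both lie in $B_r(x_0)$, so $f$ vanishes on them and $f_{j_1,j_2}(y)=0$. Thus $f_{j_1,j_2}$ is, by Lemma~\ref{derivative}, a $\lambda_{i-1}(n-2,w-1)$-eigenfunction vanishing on $B_{r-1}(x_0')$, and since $w-1>r-1\ge i-1\ge 0$ and $n-2\ge 2(w-1)$, the induction hypothesis yields $f_{j_1,j_2}\equiv 0$.

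Running this over all such pairs shows that $f$ is invariant under every coordinate transposition $(j_1\,j_2)$ for which $x_0$ has a $1$ in one of the positions and a $0$ in the other; these transpositions (over all $1$-positions and all $0$-positions of $x_0$) generate the full symmetric group $S_n$, which acts transitively on the $w$-subsets, so $f$ is constant on $J(n,w)$, and $f(x_0)=0$ forces $f\equiv 0$. I expect the main obstacle to be purely bookkeeping: one must respect the convention $j_1<j_2$ in the definition of $f_{j_1,j_2}$ and therefore treat both orderings of a chosen $1$-position and $0$-position (the distance count is symmetric in the two cases), and one must check that the reduced parameters $(i-1,w-1,r-1,n-2)$ still satisfy the theorem's hypotheses so the induction is legitimate; everything else is routine.
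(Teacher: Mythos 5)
Your proof is correct, but it is organized quite differently from the paper's. The paper argues by contradiction in one shot: it picks the vertex $x$ closest to $x_0$ with $f(x)\neq 0$ (at distance $r'>r\ge i$), applies the derivative of Lemma~\ref{derivative} exactly $i+1$ times along coordinate pairs chosen to straddle the supports of $x_0$ and $x$, notes that the result is a $\lambda_{-1}$-eigenfunction and hence identically zero, and then evaluates it at one specific vertex where all contributions except $f(x)$ come from $B_{r'-1}(x_0)$ and vanish by minimality --- yielding $f(x)=0$, a contradiction. You instead induct on $i$: a single derivative $f_{j_1,j_2}$ (for a mixed pair, one $1$-position and one $0$-position of $x_0$) vanishes on $B_{r-1}(x_0')$ by your distance count, so the induction hypothesis kills it; the extra idea you need, which the paper avoids entirely, is that the vanishing of all these derivatives makes $f$ invariant under all transpositions between $\mathrm{supp}(x_0)$ and its complement, which generate $S_n$, forcing $f$ to be constant and hence zero. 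Your route trades the paper's careful bookkeeping of a distinguished nonzero value for a clean symmetry argument plus the multiplicity-one fact for $\lambda_0$; the paper's route is shorter and does not need induction, but relies on the convention that an eigenfunction for a non-eigenvalue (the ``$\lambda_{-1}$'' case) is the zero function. Your parameter checks ($w-1>r-1\ge i-1\ge 0$, $n-2\ge 2(w-1)$, and $0<w<n$ so that mixed pairs exist and the bipartite transposition graph is connected) are exactly the right ones and all hold under the theorem's hypotheses.
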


\begin{proof}

 Suppose that $f\not\equiv 0$. Let $r'$ be the smallest integer such that there exist $x\in J(n,w)$, $d(x_0,x)=r'$ and $f(x)\neq 0$. By the condition of the theorem it is known that $r'> i$. 
 
 Without loss of generality we can take $$x_0=(\underbrace{1,\dots,1}_{w},\underbrace{0,\dots,0}_{n-w}),
 x=(\underbrace{1,\dots,1}_{w-r'},\underbrace{0,\dots,0}_{r'},\underbrace{1,\dots,1}_{r'},\underbrace{0,\dots,0}_{n-w-r'}).$$
 
  Consider the function $f_{w+1,w-r'+1}$. By Lemma \ref{derivative} this function is a $\lambda_{i-1}(n-2,w-1)$-eigenfunction of $J(n-2,w-1)$. After deleting two coordinates we obtain the set $\{1,2,3,\dots,w-r',w-r'+2,\dots,w,w+2,\dots, n\}$. By repeating this procedure for pairs of coordinates $(w+j, w-r'+j)$ for $j=2,3,\dots, i+1$ we obtain the function $q:J(n-2i-2,w-i-1)\to \mathbb{R}$ such that: $$q=(\dots((f_{w+1,w-r'+1})_{w+2,w-r'+2})\dots)_{w+i+1,w-r'+i+1}.$$ By Lemma \ref{derivative} this function is a $\lambda_{-1}(n-2i-2,w-i-1)$-eigenfunction of $J(n-2i-2,w-i-1)$, in other words, just the all-zero function. On the other hand,  $q(\underbrace{1,\dots,1}_{w-r'},\underbrace{0,\dots,0}_{r'-i-1},\underbrace{1,\dots,1}_{r'-i-1},\underbrace{0,\dots,0}_{n-w-r'})$ equals a linear combination of values of $f$. 
  
  All vectors except $x$ in this combination are elements of the $B_{r'-1}(x_0)$, so we conclude that  $q(\underbrace{1,\dots,1}_{w-r'},\underbrace{0,\dots,0}_{r'-i-1},\underbrace{1,\dots,1}_{r'-i-1},\underbrace{0,\dots,0}_{n-w-r'})=f(x)\neq 0$, which contradicts the fact that $q$ is the all-zero function.

\end{proof}

As we see for the case of a ball $B_r(x_0)$, $r>i$, if the reconstruction of the function by its values on this set is possible then it is always unique. For the case of a sphere $S_r(x_0)$, the problem becomes more complicated and an answer depends on calculating some polynomials.

Let us define the following polynomials:

$$F_1(k_1,k_2,i,r,w,n)=\sum_{s=0}^{k_2-k_1}{\frac{{r-k_1 \choose s}{n-w-k_2-r \choose {k_2-k_1-s}}Eb_{r-k_1-s}(i-(k_1+k_2),w-2k_1,n-2(k_1+k_2))}{{w-2k_1 \choose {r-k_1-s}}{n-w-2k_2 \choose {r-k_1-s}}}},$$
for $k_1\le k_2$ and

    $$F_2(k_1,k_2,i,r,w,n)= 
    \sum_{s=0}^{k_1-k_2}{\frac{{w-r-k_1 \choose s}{r-k_2 \choose {k_1-k_2-s}}Eb_{r-k_1+s}(i-(k_1+k_2),w-2k_1,n-2(k_1+k_2))}{{w-2k_1 \choose {r-k_1+s}}{n-w-2k_2 \choose {r-k_1+s}}}},$$
for $k_1 \ge k_2$.

\begin{theorem}\label{mainsphere}
	
	Let $i,w,r,n$ be integers, $\frac{n}{2}\ge w\ge i\ge 0$ and $n\ge \max(w+r+i,2w,w+2r+2)$, $x_0$ be a vertex of $J(n,w)$. Then there does not exist a $\lambda_i(n,w)$-eigenfunction $f$ of $J(n,w)$ such that $f\big|_{S_r(x_0)} \equiv 0$ and $f \not \equiv 0$ if and only if the following inequalities hold:
	
	\begin{enumerate}
		\item 	$i \le r \le w-i$
		\item 	$ F_1(k_1,k_2,i,r,w,n)\neq 0$, $k_1+k_2=0,1,\dots, i-1$, $k_1=0,1,\dots, \left \lfloor{\frac{k_1+k_2}{2}}\right \rfloor$
		\item 	$ F_2(k_1,k_2,i,r,w,n)\neq 0$, $k_1+k_2=0,1,\dots, i-1$, $k_1=\left \lceil{\frac{k_1+k_2}{2}}\right \rceil ,\left\lceil{\frac{k_1+k_2}{2}}\right \rceil+1,\dots, k_1+k_2$.
	\end{enumerate}
	
\begin{proof}
	Sufficiency. 
	Suppose that inequalities 1-3 hold but there exist a $\lambda_i(n,w)$-eigenfunction $f$ of $J(n,w)$ such that $f\big|_{S_r(x_0)} \equiv 0$ and $f \not \equiv 0$. 
	
	Without loss of generality we have $x_0=(\underbrace{1,\dots,1}_{w},\underbrace{0,\dots,0}_{n-w})$.
	
	Let us consider functions $f_{j_1,j_2}$ for $j_1,j_2\in \{1,2,\dots,w\}$, $j_1\neq j_2$. If for some such a pair $l_1$, $l_2$ the function $f_{l_1,l_2}$ is not the all-zero function then we take $f^{(1)}\equiv f_{l_1,l_2}$ and repeat the procedure for $j_1,j_2\in \{1,2,\dots,w\}\setminus \{l_1,l_2\}$. Suppose that this procedure has worked $k_1$ times and we can not continue. After an appropriate permutation of coordinate positions from the set $1,2,\dots,w$ we obtain the function $f^{(k_1)}\equiv (\dots(f_{1,k_1+1})_{2,k_1+2}\dots)_{k_1,2k_1}$ and for every pair $j_1$, $j_2$ of different integers from $\{2k_1+1,2k_1+2,\dots, w\}$ we have that $f^{(k_1)}_{j_1,j_2}\equiv 0$.    
	
	Now we repeat the whole process described above for the function $f^{(k_1)}$ and a set of coordinate positions $w+1,w+2,\dots, n$. Suppose that the procedure has worked $k_2$ times and we can not continue. After an appropriate permutation of coordinate positions from the set $w+1,w+2,\dots,n$ we obtain the function $f^{(k_1+k_2)}\equiv (\dots((((\dots(f_{1,k_1+1})_{2,k_1+2}\dots)_{k_1,2k_1})_{w+1,w+k_2+1})_{w+2,w+k_2+2})\dots)_{w+k_2,w+2k_2}$ and for every pair of different integers $j_1$, $j_2$ from $\{w+2k_2+1,w+2k_2+2,\dots, n\}$ we have that $f^{(k_1+k_2)}_{j_1,j_2}\equiv 0$. By construction of $f^{(k_1+k_2)}$ one can see that for every pair of different integers $j_1$, $j_2$ from $\{2k_1+1,2k_1+2,\dots, w\}$ the equality $f^{(k_1+k_2)}_{j_1,j_2}\equiv 0$ holds too. 
	
	In other words, for $x\in J(n-2k_1-2k_2,w-k_1-k_2)$ the value $f^{(k_1+k_2)}(x)$ depends only on the number of ones of $x$ in the set $\{2k_1+1,2k_1+2,\dots, w\}$ and does not depend on a distribution of ones of $x$ inside the sets $\{2k_1+1,2k_1+2,\dots, w\}$ and $\{w+2k_2+1,w+2k_2+2,\dots, n\}$.
	

	By Lemma \ref{derivative} we know that $f^{(k_1+k_2)}$ is a $\lambda_{i-k_1-k_2}(n-2k_1-2k_2,w-k_1-k_2)$-eigenfunction of $J(n-2k_1-2k_2,w-k_1-k_2)$. Therefore $k_1+k_2\le i$.

    Let us consider a vertex of $J(n,w)$ $$y=(\underbrace{1,\dots,1}_{k_1},\underbrace{0,\dots,0}_{k_1},\underbrace{1,\dots,1}_{w-r-k_1},\underbrace{0,\dots,0}_{r-k_1},\underbrace{1,\dots,1}_{k_2},\underbrace{0,\dots,0}_{k_2},\underbrace{1,\dots,1}_{r-k_2},\underbrace{0,\dots,0}_{n-w-r-k_2})$$
	and vertices of $J(n-2(k_1+k_2),w-(k_1+k_2))$ $$z=(\underbrace{1,\dots,1}_{w-r-k_1},\underbrace{0,\dots,0}_{r-k_1},\underbrace{1,\dots,1}_{r-k_2},\underbrace{0,\dots,0}_{n-w-r-k_2}), 
	x_0=(\underbrace{1,\dots,1}_{w-2k_1},\underbrace{0,\dots,0}_{n-w-2k_2}).$$
	The fact that $k_1\le i$ and $k_1\le i$ together with inequalities from the statement of the theorem guarantees us that all lengths of intervals in $y$ and $z$ are nonnegative. 
	
	Let us notice that by construction $f^{(k_1+k_2)}(z)$ is a linear combination of values of $f$ in vertices from the set $S_r(x_0)$, i.e. $f^{(k_1+k_2)}(z)=0$.
	
	In case of equality $k_1+k_2=i$, a function $f^{(k_1+k_2)}$ is a $\lambda_{0}(n-2i,w-i)$-eigenfunction, i.e. a nonzero constant function. Hence $f^{(k_1+k_2)}(z)\neq 0$ and we get a contradiction. 
	
	So we conclude that $k_1+k_2 < i$. Let us consider a function $h:J(n-2k_1-2k_2,w-2k_1)\rightarrow \mathbb{R}$ defined as follows $h=I^{w-k_1-k_2,w-2k_1}(f^{(k_1+k_2)})$. As it was mentioned above the value $f^{(k_1+k_2)}(x)$ depends only on the number of ones of $x$ in the set $\{2k_1+1,2k_1+2,\dots, w\}$. This fact implies that $h$ is a radial function (see Lemma \ref{radial}) with a center $x_0'$ having ones in coordinate positions $\{2k_1+1,2k_1+2,\dots, w\}$. 
	
	If $h(x_0')=0$ then as a radial function $h$ must be the all-zero function. Therefore by Lemma \ref{inducing} we have $h\not\equiv 0$, so  for $x\in J(n-2(k_1+k_2),w-2k_1)$, $d(x_0,x)=j$, we have $$h(x)=h(x_0')\frac{Eb_j(i-(k_1+k_2),w-2k_1,n-2(k_1+k_2))}{\binom{w-2k_1}{j}\binom{n-w-2k_2}{j}}.$$ 
    By Lemma \ref{inducing} we have $f^{(k_1+k_2)}=\alpha I^{w-2k_1,w-k_1-k_2}(I^{w-k_1-k_2,w-2k_1}(f^{(k_1+k_2)}))$ for some nonzero $\alpha$. In other words, $f^{(k_1+k_2)}=\alpha I^{w-2k_1,w-k_1-k_2}(h)$. 
    As we noted before $f^{(k_1+k_2)}(z)=0$. Let us find this value in terms of values of $h$. There are two different cases $k_1\le k_2$ and $k_1\ge k_2$  By direct calculation we have
    that

    \begin{flushleft}
      $f^{(k_1+k_2)}(z)=$
    \end{flushleft}
    
       \begin{flushright}
          $ \alpha h(x_0')\begin{cases}
              \sum_{s=0}^{k_2-k_1}{\frac{{r-k_1 \choose s}{n-w-k_2-r \choose {k_2-k_1-s}}Eb_{r-k_1-s}(i-(k_1+k_2),w-2k_1,n-2(k_1+k_2))}{{w-2k_1 \choose {r-k_1-s}}{n-w-2k_2 \choose {r-k_1-s}}}}, k_1\le k_2\\
              \sum_{s=0}^{k_1-k_2}{\frac{{w-r-k_1 \choose s}{r-k_2 \choose {k_1-k_2-s}}Eb_{r-k_1+s}(i-(k_1+k_2),w-2k_1,n-2(k_1+k_2))}{{w-2k_1 \choose {r-k_1+s}}{n-w-2k_2 \choose {r-k_1+s}}}},\text{otherwise.}
              \end{cases}$
        \end{flushright}
   Therefore at least one value among $F_1(k_1,k_2,i,r,w,n)$ and $F_2(k_1,k_2,i,r,w,n)$ for some $k_1$, $k_2$ is a zero and we get a contradiction.

	Necessity. 
	If the inequality $i \le r \le w-i$ is false then by Proposition \ref{examplesof0} there exist a non-zero $\lambda_i(n,w)$-eigenfunction $f$ which is the all-zero function on a sphere of radius $r$. 
	
	In case of $F_1(k_1,k_2,i,r,w,n)=0$ or $F_2(k_1,k_2,i,r,w,n)=0$ for some $k_1$, $k_2$, an example can be constructed as follows. Let us take a function $f^{(k_1+k_2)}$ which was built in the first part of the proof as a result of inducing operator applied to the radial function $h$. Then by using $k_1$ times Lemma \ref{integral} we add coordinate positions $\{j,k_1+j\}$ for $j=1,2,\dots, k_1$ and by $k_2$ times we add coordinate positions  $\{w+j,w+k_2+j\}$ for $j=1,2,\dots, k_2$. As a result we build a $\lambda_i(n,w)$-eigenfunction of $J(n,w)$ which is the all-zero function on a sphere of radius $r$.  
	
\end{proof}

\end{theorem}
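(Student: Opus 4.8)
The plan is to restate the claim in its equivalent linear-algebra form (valid because each eigenspace is a linear space): for $n$ in the given range there is a nonzero $\lambda_i(n,w)$-eigenfunction $f$ with $f\big|_{S_r(x_0)}\equiv 0$ if and only if one of conditions 1--3 fails. I would prove sufficiency and necessity separately; the whole argument rests on two tools from the Preliminaries, the partial derivative $f_{j_1,j_2}$ of Lemma \ref{derivative} and the inducing operator of Theorem \ref{i_f} and Proposition \ref{inducing}. For \emph{sufficiency}, assume conditions 1--3 hold and, for contradiction, that such an $f\not\equiv 0$ exists; normalize $x_0=(1^w,0^{n-w})$. First I would apply $f\mapsto f_{j_1,j_2}$ repeatedly with both indices inside the ``ones block'' $\{1,\dots,w\}$, keeping only nonzero outputs; say this works $k_1$ times. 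The key point is that once no further derivative inside the remaining ones-coordinates is nonzero, the current function is invariant under every transposition of those coordinates, hence under their whole symmetric group, so it depends only on the number of ones in that block. Then I would repeat the process inside the ``zeros block'' $\{w+1,\dots,n\}$, say $k_2$ times; since derivatives in disjoint coordinate sets commute, the symmetry already obtained in the ones block survives. By Lemma \ref{derivative} the resulting $f^{(k_1+k_2)}$ is a nonzero $\lambda_{i-k_1-k_2}$-eigenfunction on a smaller Johnson graph, which forces $k_1+k_2\le i$ (a $\lambda_{-1}$-eigenfunction is the zero function, and a constant has vanishing derivatives); and if $k_1+k_2=i$ then $f^{(k_1+k_2)}$ is a nonzero constant, which will contradict $f^{(k_1+k_2)}(z)=0$ below. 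Hence $k_1+k_2\le i-1$.

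For the explicit part of sufficiency, since $f^{(k_1+k_2)}$ depends only on the number of ones in the surviving ones-coordinates, inducing it into $J(\cdot,w-2k_1)$ yields (Theorem \ref{i_f}) an eigenfunction $h$ that is radial in the sense of Lemma \ref{radial}, with centre $x_0'$ carrying all ones on the surviving ones-block, so $h$ equals $h(x_0')$ times the explicit Eberlein-polynomial formula. If $h(x_0')=0$ then $h\equiv 0$, and Proposition \ref{inducing} forces $f^{(k_1+k_2)}\equiv 0$, a contradiction; so $h(x_0')\neq 0$. Now take $z=(1^{w-r-k_1},0^{r-k_1},1^{r-k_2},0^{n-w-r-k_2})$ on the surviving coordinates; a short count (using $n\ge w+r+i$ and $i\le r\le w-i$ to keep all block lengths nonnegative) shows that every one of its $2^{k_1+k_2}$ lifts to $J(n,w)$ obtained by putting a single $1$ into each deleted pair lies in $S_r(x_0)$, so $f^{(k_1+k_2)}(z)=0$. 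On the other hand, by Proposition \ref{inducing} $f^{(k_1+k_2)}=\alpha\, I^{w-2k_1,w-k_1-k_2}(h)$ for some $\alpha\neq 0$; expanding this induced value at $z$ as the sum over the neighbours $y'$ of $z$ at the required distance and grouping them by how many of the extra ones fall into the ones block turns $f^{(k_1+k_2)}(z)$ into exactly $\alpha\, h(x_0')\,F_1(k_1,k_2,i,r,w,n)$ when $k_1\le k_2$ and $\alpha\, h(x_0')\,F_2(k_1,k_2,i,r,w,n)$ when $k_1\ge k_2$ (the ranges $k_1\le\lfloor(k_1+k_2)/2\rfloor$ and $k_1\ge\lceil(k_1+k_2)/2\rceil$ in conditions 2--3 being precisely these two cases). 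Since $\alpha, h(x_0')\neq 0$, the relevant polynomial vanishes for an admissible pair $(k_1,k_2)$, contradicting condition 2 or 3.

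For \emph{necessity}, if $i\le r\le w-i$ fails then Proposition \ref{examplesof0} together with the vertex-transitivity of $J(n,w)$ supplies a nonzero $\lambda_i$-eigenfunction vanishing on $S_r(x_0)$. If instead $F_1$ (resp.\ $F_2$) vanishes at some admissible $(k_1,k_2)$, I would run the above construction in reverse: take the radial $\lambda_{i-k_1-k_2}$-eigenfunction $h$, induce it up to $f^{(k_1+k_2)}$ on $J(n-2k_1-2k_2,w-k_1-k_2)$, and then apply Lemma \ref{integral} $k_1$ times to reinstate the ones-block pairs and $k_2$ times for the zeros-block pairs, landing on a $\lambda_i(n,w)$-eigenfunction $f$. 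It is nonzero because both inducing (Proposition \ref{inducing}) and the operator of Lemma \ref{integral} preserve non-vanishing; and it vanishes on $S_r(x_0)$ because $f$ is supported on vectors with exactly one $1$ in each reinstated pair, and every such vector in $S_r(x_0)$ projects to a vertex with the same number of ones in the surviving ones-block as $z$ (which is all that $f^{(k_1+k_2)}$ sees), so the value of $f$ there equals $\pm f^{(k_1+k_2)}(z)=\pm\alpha\, h(x_0')\,F_1=0$.

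The conceptual heart — and the step I expect to be the main obstacle to get right — is the observation that differentiating an arbitrary eigenfunction inside each block until one is stuck leaves a function that is radial up to inducing, which is what makes everything explicit; the technical heart is the bookkeeping that turns $I^{w-2k_1,w-k_1-k_2}(h)(z)$ into exactly $F_1$ or $F_2$ (matching distances, block sizes and binomial counts) together with checking that the admissible ranges for $(k_1,k_2)$ are precisely those listed in conditions 2--3. The hypothesis $n\ge\max(w+r+i,2w,w+2r+2)$ is used only to guarantee that every block length appearing in $y$, $z$ and $x_0'$ is nonnegative, so that all these vertices actually exist.
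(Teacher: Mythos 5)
Your proposal is correct and follows essentially the same route as the paper: differentiating within the ones block and then the zeros block until stuck, identifying the result (after inducing) with a radial function, evaluating at $z$ to produce $F_1$ or $F_2$, and reversing the construction via Lemma \ref{integral} for necessity. The only differences are expository (e.g.\ you make explicit that all $2^{k_1+k_2}$ lifts of $z$ lie in $S_r(x_0)$, which the paper states more briefly).
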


\section{Discussion}

In this paper we find a criterion of unique reconstruction of an $\lambda_i(n,w)-$eigenfunction of a Johnson $J(n,w)$ graph by its values on a sphere of a given radius $r$ for $n\ge \max(w+r+i,2w,w+2r+2)$. Consequently, for some values of $r$, $w$, $i$ and small $n$, the answer remains unknown. This criterion is based on zeros of some large polynomial containing Eberlein polynomials. 

As it was shown in the Theorem \ref{mainball}, every $\lambda_i(n,w)$-eigenfunction is determined by its values in a ball of radius $i$. Similar statement can be proved for Hamming graphs $H(n,q)$ (for these graphs by $i$-th eigenvalue one means $(q-1)n-qi$). It would be interesting to check this statement for other families of distance regular graphs, for example for Grassman, Doob and bilinear forms graphs.

\section{Acknowledgements}
 This paper is an extended version of the talk given by the author at the Second Russian-Hungarian Combinatorial Workshop held in Budapest, June 27-29, 2018. The reported study was funded by RFBR according to the research project 18-31-00126.

\end{document}